\numberwithin{equation}{section}
\title{MULTIPLICATIVE DIOPHANTINE EXPONENTS OF HYPERPLANES}
\author{Yuqing Zhang}
\address{Brandeis University, Waltham MA
02454-9110 {\tt yqzhang@brandeis.edu}}
\newif\ifdraft\drafttrue
\newcommand{\R}{{\mathbb{R}}}
\newcommand{\Z}{{\mathbb{Z}}}
\newcommand{\N}{{\mathbb{N}}}
\newcommand{\q}{{\bf{q}}}
\newcommand{\GL}{\operatorname{GL}}
\newcommand{\SL}{\operatorname{SL}}
\newcommand{\z}{{\bf z}}
\newcommand{\w}{{\bf w}}
\newcommand{\p}{{\bf p}}
\newcommand{\y}{{\bf y}}
\newcommand {\ignore}[1]  {}
\newtheorem{thm}{Theorem}[section]
\begin{document}

\begin{abstract}

We study multiplicative Diophantine approximation property of vectors and compute Diophantine exponents of hyperplanes via dynamics.

\end{abstract}


\maketitle

\section{Introduction}

For a vector $\y  = (y_1, \ldots, y_n) \in \R^n$ its Diophantine
exponent is defined by
\begin{equation}
\label{eq: des}\omega(\y)=\textrm{sup} \{v|\textrm{ } \exists
\infty \textrm{ many } \q=(q_1,q_2,\ldots,q_n) \in \Z^n \textrm{ with }
\nonumber
\end{equation}
\begin{equation}
|\q\y+p|=|q_1y_1+q_2y_2+\ldots+q_ny_n+p|<\|\q\|^{-v}\textrm{ for some } p \in \Z\}
\end{equation}

In this standard definition the size of the vector $\q$ is measured
by its supreme norm and all its entries with smaller absolute values
than the norm  are simply ignored. A more elaborate way of measuring
$\q$ is by

\begin{displaymath}
\prod_\times(\q)=\prod_{i=1\ q_i\neq 0}^{n}|q_i|
\end{displaymath}
And we have the \textit{multiplicative} version of Diophantine
exponent for $\y$:
\begin{equation}
\label{eq: dem} \omega^\times(\y)=\textrm{sup} \{v|\textrm{ }
\exists \infty \textrm{ many } \q \in \Z^n \textrm{ with } |
\q\y+p|<{\prod_\times(\q)}^{-v/n} \textrm{ for some } p \in \Z\}
\end{equation}

Besides, $\y$ can be viewed as a column vector and be equipped with the $\sigma$ version of
Diophantine exponent:
\begin{displaymath}
\sigma(\y)=\textrm{sup} \Big\{v|\textrm{ }
\exists \infty \textrm{ many } q \in \Z \textrm{ with } \left\|  \begin{array}{c}
qy_1+p_1\\\vdots  \\ qy_n+p_{n}\\ \end{array}  \right\|<|q|^{-v}
\textrm{ for some } \p \in \Z^n\ \Big\}
\end{displaymath}

 We further define the \textit{multiplicative} Diophantine exponent
$\omega^\times(\mu)$ of a
 Borel measure $\mu$ to be the $\mu$-essential supreme of the $\omega^\times$
function, that is,
\begin{equation}
\label{eq: demu} \omega^\times(\mu)=\textrm{sup} \{v|\textrm{ }
\mu\{\y|\textrm{ }\omega^\times(\y)>v\}>0 \}
\end{equation}

And for a smooth submanifold M of $\R^n$ with measure class of its
Riemannian volume denoted by $\mu$, we set
$\omega^\times(M)=\omega^\times(\mu)$. $\omega(\mu)$ and $\omega(M)$
are defined in an analogous way.\\ From these definitions a couple
of inequalities are derived:
\begin{equation}
\label{eq: sm}\omega^\times(\y)\geqq \omega(\y)
\end{equation}

\begin{equation}
\label{eq: sm1}\omega^\times(M)\geqq \omega(M)
\end{equation}

Suppose $L$ is an affine subspace of $\R^n$. $\omega(L)$ has
been studied to great lengths, both by elementary methods  as in [J]
and by quantitative nondivergence by D. Kleinbock in [K1][K3].

By comparison the multiplicative exponent of $L$ is inherently more
complicated. Here advanced mathematical tools appear to be much more
desirable, if not indispensable.In this paper we apply nondivergence which has been developed and
strengthened in [K1][K2][K3] to find out multiplicative exponents of
hyperplanes and their nondegenerate submanifolds.

One of the major theorems we are to establish is:
\begin{thm}\label{cor: dicho}
Let $L$ be a hyperplane of  $\R^n$ parameterized by \\ $(x_1, \ldots,
x_{n-1}, a_1x_1+\ldots+a_{s-1}x_{s-1}+b)$ with $a_i\neq0,
i=1,\ldots,s-1$,$s\leqslant n$ and $M$ a non-degenerate
submanifold in $L$. We have\\
 $ \textrm{
 }\omega^\times(L)=\omega^\times(M)=max\big\{n,\dfrac{n}{s}\sigma(a_1,\ldots,a_{s-1},b)\big\}$

\end{thm}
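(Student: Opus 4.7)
The plan is to establish two matching inequalities for $\omega^\times$ of $L$ and of any non-degenerate $M\subset L$. Writing $\va=(a_1,\ldots,a_{s-1},b)$, the lower bound $\omega^\times\geq \max\{n,\frac{n}{s}\sigma(\va)\}$ will come from an explicit construction of integer approximations valid at every $\y\in L$; the matching upper bound will come from quantitative nondivergence for a unipotent orbit in $\SL_{n+1}(\R)/\SL_{n+1}(\Z)$ applied to a $(C,\alpha)$-good parameterization of $M$, which by design works for any non-degenerate submanifold and yields the same bound for $L$ itself.

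For the lower bound, the inequality $\omega^\times(\y)\geq n$ at every $\y\in\R^n$ is immediate from Dirichlet's theorem combined with \eqref{eq: sm}. To obtain $\omega^\times(\y)\geq\frac{n}{s}\sigma(\va)$ at a generic $\y=(x_1,\ldots,x_{n-1},\sum_{i<s}a_ix_i+b)\in L$, I would take an integer tuple $(q,p_1,\ldots,p_s)$ with $\max\{|qa_1+p_1|,\ldots,|qa_{s-1}+p_{s-1}|,|qb+p_s|\}<|q|^{-\sigma+\vre}$ provided by the definition of $\sigma(\va)$, and assemble $\q=(p_1,\ldots,p_{s-1},0,\ldots,0,q)\in\Z^n$ with $p=p_s$. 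A direct expansion gives $\q\y+p=\sum_{i<s}(p_i+qa_i)x_i+(qb+p_s)=O_\y(|q|^{-\sigma+\vre})$, while $|p_i|\ll|q|$ yields $\prod_\times(\q)\ll|q|^s$. As $q$ ranges over infinitely many $\sigma(\va)$-solutions, this forces $\omega^\times(\y)\geq\frac{n}{s}\sigma(\va)$ at every $\y\in L$ with bounded coordinates, hence almost everywhere on $L$ and on $M$.

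For the upper bound I would follow the Dani correspondence. Given $\y$, let $u_\y\in\SL_{n+1}(\R)$ be the upper-triangular unipotent encoding $\y$ in its last column and $a_\T=\diag(e^{t_1},\ldots,e^{t_n},e^{-|\T|})$ for $\T\in\R_{\geq 0}^n$. A standard unfolding shows that $\omega^\times(\y)>v$ is equivalent to $a_\T u_\y\Z^{n+1}$ having a nonzero vector of length $\ll e^{-c(v)|\T|}$ for an unbounded set of $\T$. Applying the quantitative $(C,\alpha)$-good nondivergence theorem of [K1]--[K3] to the parameterization $\ff$ of $M\subset L$ reduces the task to estimating, for each rational subspace $V\subset\Q^{n+1}$, the minimal covolume of $a_\T u_{\ff(x)}V$ over $\T$. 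Generic rational subspaces contribute at most the classical exponent $n$; the new contribution comes from the distinguished rational subspace determined by the hyperplane relation $x_n-\sum_{i<s}a_ix_i=b$, whose covolume is controlled precisely by how well $\va$ is simultaneously rationally approximable, producing the term $\frac{n}{s}\sigma(\va)$.

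The main obstacle will be the sharp covolume analysis of this distinguished rational subspace: one must show that the bound it produces is exactly $\frac{n}{s}\sigma(\va)$ rather than some weaker estimate, and that the non-degeneracy of $M$ in $L$ is enough for the $(C,\alpha)$-good property of $\ff$ to transfer the estimate from $L$ to the lower-dimensional $M$. The shape $\max\{n,\frac{n}{s}\sigma(\va)\}$ itself reflects a dichotomy of obstructions: a ``generic'' one present for every $\R^n$-vector, and a ``rational-hyperplane'' one determined entirely by the Diophantine quality of the coefficient vector $\va$. Combining the two inequalities then yields $\omega^\times(L)=\omega^\times(M)=\max\{n,\frac{n}{s}\sigma(\va)\}$.
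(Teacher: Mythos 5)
Your plan is correct in outline and, for the upper bound, coincides with the paper's: both pass through the correspondence of Lemma 2.3 between multiplicative approximation of $\y$ and small vectors in $g_{\T}u_{\y}\Z^{n+1}$, and both invoke quantitative nondivergence for a $(C,\alpha)$-good parameterization so that the same bound is obtained simultaneously for $L$ and for any nondegenerate $M\subset L$. Where you genuinely diverge is the lower bound. The paper never constructs approximants; instead it proves a dichotomy (Theorem 2.8 together with Lemma 2.7): for each $v>n$ either the nondivergence hypothesis (2.6.2) holds, in which case $\omega^{\times}(f_{*}\mu)\leq v$, or it fails, in which case \emph{every} point of $f(B\cap \supp\,\mu)$ lies in $W^{\times}_{u}$ for some $u>v$. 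This yields the zero--one law $\omega^{\times}(L)=\omega^{\times}(M)=\inf\{\omega^{\times}(\y)\,|\,\y\in L\}$ and reduces everything to locating the critical $v$, which is then computed by running Lemma 2.3 backwards. Your explicit choice $\q=(p_{1},\ldots,p_{s-1},0,\ldots,0,q)$, $p=p_{s}$, giving $\q\y+p=\sum_{i<s}(p_{i}+qa_{i})x_{i}+(qb+p_{s})$ with $\prod_{\times}(\q)\ll|q|^{s}$, is valid and more elementary; it delivers $\omega^{\times}(\y)\geq\frac{n}{s}\sigma(a_{1},\ldots,a_{s-1},b)$ at every point of $L$ and makes transparent why the factor is $n/s$ (exactly $s$ nonzero entries of $\q$ against the normalization $-v/n$), though it does not by itself give the $\inf$ formula that the dichotomy provides. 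On the upper bound, the step you flag as the main obstacle is precisely where the paper's remaining work lives, and your phrase ``the distinguished rational subspace'' slightly misdescribes it: by Lemma 4.6 of [K2] every subgroup of rank $j>1$ satisfies $\|RC_{J}(\w)\|\geq 1$ and so never obstructs nondivergence, hence the only possible obstructions are single integer vectors $\w=(p_{0},\ldots,p_{n})$ with $p_{s}=\cdots=p_{n-1}=0$; the hypothesis $a_{i}\neq 0$ then forces $p_{1},\ldots,p_{s-1},p_{n}$ all nonzero, i.e. $\w\in\Z^{n+1}_{s}$, and feeding this family into Lemma 2.3 with $k=s$ converts the failure of (2.6.2) exactly into the simultaneous-approximation condition defining $\sigma(a_{1},\ldots,a_{s-1},b)$, giving the threshold $v=\frac{n}{s}\sigma$. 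Your argument would be complete once that identification is carried out; as written it is a correct skeleton with the decisive computation named but not executed.
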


\section{Quantitative  nondivergence and proof of Theorem 1.1}
\newtheorem{defi}{Definition}[section]
\begin{defi}
$W^\times_v=\{\y \in R^n \textrm{  }|\omega^\times(\y)\geqq v\}$
\end{defi}

\begin{defi}
$\Omega_{n+1}=\SL(n+1,\R)\diagup \SL(n+1,\Z)$
\end{defi}
 $\Omega_{n+1}$ is non-compact,   and
\begin{equation} \label{eq: space}
\Omega_{n+1}=\bigcup_{\epsilon>0}K_\epsilon
\end{equation}
where  $K_\epsilon=\{\Lambda \in \Omega_{n+1}| \textrm{  }\|v\| \geq
\epsilon \textrm{  for all nonzero   }v \in \Lambda\}$. Each
$K_\epsilon$ is compact. \\
We associate $\y$ with a $(n+1)\times(n+1)$ matrix
\begin{displaymath}
u_\y=\left(\begin{array}{cc} 1 & \y\\
0 & I_n\end{array}\right)
\end{displaymath}
 And for $\textbf{t}=(t_1,\ldots,t_n)$ with $t_i\geqslant0$, set
 $t=\sum_{i=1}^{n}t_i$,
$g_\textbf{t}=diag(e^t,e^{-t_1},\ldots,e^{-t_n})$

The following lemma will enable us to study multiplicative
diophantine approximation via $g_\textbf{t}$ action. It is
essentially the same as Lemma 2.1 of [K2] and a proof can be found
there.

\newtheorem{lemma}[defi]{Lemma}
\begin{lemma}
Suppose we are given a positive integer $k$ and a set $E$ of $(x,\z)
\in \R^{n+1}$ which is discrete and homogeneous with respect to
positive integers. $z_i\geqslant 1$ for $i_1,\ldots, i_k$ and
$z_i=0$ for the rest $i$. Take $v>n$ and
$c_k=\dfrac{v-n}{kv+n}$,then the following are equivalent:

\begin{enumerate}
\item $\exists (x,\z) \in E$  with arbitrarily large $\|\z\|$ such that
$|x| \leq {\prod_\times(\z)}^{-v/n}$ \item $\exists$ an unbounded
set of $\textbf{t}\in \R_+^n$ such
that for some $(x,\z) \in E \backslash\{0\}$ one has\\
max$(e^{t}|x|,e^{-t_i}|z_i|) \leqq e^{-c_kt}$

\end{enumerate}

\end{lemma}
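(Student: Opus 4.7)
The plan is to prove the two implications by making an explicit choice of $\T$ in terms of $(x,\z)$ in one direction, and by extracting the product bound from the coordinatewise inequalities in the other. The constant $c_k=\frac{v-n}{kv+n}$ is engineered so that
\[
1-kc_k=\frac{n(k+1)}{kv+n},\qquad 1+c_k=\frac{v(k+1)}{kv+n}=\frac{v}{n}(1-kc_k),
\]
and all the algebra below reduces to this one identity.

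For (1) $\Rightarrow$ (2), given $(x,\z)\in E$ satisfying $|x|\leq\prod_\times(\z)^{-v/n}$, I would set $t_{i_j}=\log|z_{i_j}|+c_k t$ for $j=1,\ldots,k$ and $t_i=0$ for the remaining indices, with $t$ determined self-consistently by $t=\sum_i t_i$. This forces $(1-kc_k)t=\log\prod_\times(\z)$, which has a positive solution since $1-kc_k>0$, and because $|z_{i_j}|\geq 1$ the resulting $t_{i_j}$ are nonnegative, so $\T\in\R_+^n$. By construction $e^{-t_i}|z_i|=e^{-c_k t}$ for each of the $k$ nonzero coordinates and $e^{-t_i}|z_i|=0$ otherwise, while the identity above rewrites the hypothesis as $|x|\leq\prod_\times(\z)^{-v/n}=e^{-(1+c_k)t}$, i.e.\ $e^t|x|\leq e^{-c_k t}$. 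Letting $(x,\z)$ range over the solutions with $\|\z\|$ unbounded yields an unbounded family of $\T$.

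For (2) $\Rightarrow$ (1), the coordinatewise bound $|z_i|\leq e^{t_i-c_k t}$, multiplied over the $k$ indices on which $\z$ is nonzero and combined with $\sum_{j=1}^k t_{i_j}\leq \sum_{i=1}^n t_i=t$, yields $\prod_\times(\z)^{v/n}\leq e^{(v/n)(1-kc_k)t}=e^{(1+c_k)t}$; together with $|x|\leq e^{-(1+c_k)t}$ this gives $|x|\leq\prod_\times(\z)^{-v/n}$, which is the inequality in (1).

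The remaining subtlety, which I expect to be the main obstacle, is producing solutions with arbitrarily large $\|\z\|$ in the converse direction. For each $\T$ in the unbounded family there is some $(x,\z)\in E\setminus\{0\}$ satisfying (2); if their $\z$-parts stayed bounded, discreteness of $E$ would leave only finitely many possibilities, and then $|x|\leq e^{-(1+c_k)t}\to 0$ together with discreteness would force $x=0$ for all large $t$. At that point I would invoke homogeneity of $E$ under multiplication by positive integers to replace $(0,\z)$ by $(0,N\z)$, which still satisfies the (trivial) bound in (1) and makes $\|N\z\|$ as large as desired. Otherwise some $|z_i|$ itself grows with $t$, and $\|\z\|$ is unbounded for the original family of solutions.
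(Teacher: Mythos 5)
The paper does not actually prove this lemma --- it states that it ``is essentially the same as Lemma 2.1 of [K2]'' and defers the proof there, so there is no in-paper argument to compare against. Your proof is correct and is essentially the standard Dani-type correspondence from that reference: the equalizing choice $t_{i_j}=\log|z_{i_j}|+c_kt$ with $t$ fixed by $(1-kc_k)t=\log\prod_\times(\z)$, the identity $1+c_k=\tfrac{v}{n}(1-kc_k)$ doing all the algebra, and the converse obtained by multiplying the coordinatewise bounds; your treatment of the ``arbitrarily large $\|\z\|$'' point is exactly the role the discreteness and integer-homogeneity hypotheses on $E$ are meant to play.
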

Accordingly $Z^{n+1}$ is decomposed as $\bigcup_{k=0}^n \Z^{n+1}_{k}$, where
\begin{equation}
 \Z^{n+1}_{k}=\big\{(p, q_1,q_2,\ldots,q_n)\in
\Z^{n+1}\big|\textrm{ exactly k entries of }(q_1,q_2,\ldots,q_n)\textrm{ nonzero}\big\}
\end{equation}

Take $v>n$, $\y  \in \R^n$ and $E=\{(|\q\y+p|,\q )\textrm{ }
|\textrm{  }p \in \R, \q \in \R^n\}$ we see

(2.3.1) is equivalent to $\omega^\times(\y) \geqq v$ and (2.3.2) equivalent to
\begin{equation} \label{eq: gt} g_\textbf{t}u_\y\Z^{n+1}_k \textrm{ contains at least one nonzero vector with norm } \leq e^{-c_kt} \nonumber
\end{equation}
\begin{equation}
\textrm{ for an unbounded set of } \textbf{t} \in \R_+^{n+1}
\end{equation}
For any fixed $v>n$, $c_1>c_2>\ldots>c_n$.\\
 $c_k=\dfrac{v-n}{kv+n}
\Leftrightarrow  v=\dfrac{n+nc_k}{1-kc_k}$

If $\gamma_k(\y)=$ sup \{$c_k |\textrm{  } (2.3) \textrm{ holds  }\} $then by preceding lemma
\begin{equation}\label{eq: omegay}
\omega^\times(\y)=max_{\substack{1\leqslant k \leqslant n}}\dfrac{n+n\gamma_k(\y)}{1-k\gamma_k(\y)}
\end{equation}

Suppose $\lambda$ is a measure on $\R^n$,  and $v\geq n$ , by
definition $\omega^\times(\lambda)\leq v$ iff $\lambda(W^\times_u)=0$ for
any $u>v$.By Borel-Cantelli Lemma, a sufficent condition for $\omega^\times(\lambda)\leq v$ is
\begin{equation}\label{eq: bcl}
\sum_{t=1}^\infty\lambda(\{\y|g_\textbf{t}u_\y\Z^{n+1}_k \textrm{ contains at least one nonzero vector with norm} \leq e^{-d_kt}\})<\infty \nonumber
\end{equation}
\begin{equation}
\forall d_k>c_k, \quad 1\leq k\leq n
\end{equation}

(2.6) provides one way of determining the upper bounds of $\omega^\times(\lambda)$.To make
it more explicit,quantitative nondivergence is needed.

\begin{lemma}\label{lemma:  thm2.2}
Let $k$, $N$ $\in \N$ and $C,D,\alpha,\rho >0$ and suppose we are
given an $N$-Besicovitch metric space $X$, a ball $B=B(x_0,
r_0)\subset X$, a measure $\mu$ which is $D$-Federer on
$\tilde{B}=B(x_0, 3^kr_0)$ and a map $h$: $\tilde{B}\rightarrow
\GL_k(\R)$. Assume the following two conditions hold:\\
\begin{enumerate}
\item $\forall \quad \Gamma \subset \Z^{k}$,  the function $ x\rightarrow
\|h(x)\Gamma\|$ is $(C, \alpha)$-good on $\tilde{B}$ with respect to
$\mu$;

\item $\forall \quad \Gamma \subset \Z^{k}$, $\|h(\cdot )\Gamma\|_{\mu, B}\geq \rho^{rk(\Gamma)}$
\end{enumerate}
Then for any positive $\epsilon \leq \rho$ one has
\begin{equation}
\mu(\{x \in B|\textrm{ } h(x)\Z^{k} \notin K_{\epsilon}\})\leq
kC(ND^2)^{k}(\dfrac{\epsilon}{\rho})^{\alpha}\mu(B)
\end{equation}
\end{lemma}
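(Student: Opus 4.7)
My plan is to follow the quantitative nondivergence scheme developed in [K1], [K2], [K3] (originating in the work of Kleinbock--Margulis): a cascade on the rank of primitive sublattices of $\Z^{k}$, combined with a Besicovitch covering, Federer regularity, and the $(C,\alpha)$-good hypothesis.

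First I would reduce the event $h(x)\Z^{k}\notin K_{\epsilon}$ to a collection of sublattice-witnessing events stratified by rank. By a Minkowski-type argument, $h(x)\Z^{k}\notin K_{\epsilon}$ is equivalent to the existence of a primitive rank-$1$ subgroup $\Gamma\subset\Z^{k}$ with $\|h(x)\Gamma\|<\epsilon$. The inductive proof, however, works through intermediate sublevel events at every rank $j\in\{1,\ldots,k\}$, and the factor $k$ appearing in the conclusion will record the union bound over these $k$ strata.

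Next I would control each rank-$j$ stratum via a Besicovitch covering. At each point $x$ of the stratum, select a witnessing primitive subgroup $\Gamma_{x}$ of rank $j$ and let $r_{x}\le r_{0}$ be the smallest radius for which $\|h(\cdot)\Gamma_{x}\|_{\mu,B(x,r_{x})}\ge\rho^{j}$; such an $r_{x}$ exists thanks to hypothesis~(2). The $N$-Besicovitch property extracts a subfamily of $\{B(x,r_{x})\}$ with multiplicity at most $N$ covering the stratum, and the $D$-Federer property transfers sup-norm data between $B(x,r_{x})$ and its $3^{k}$-enlargement inside $\tilde{B}$. Applying hypothesis~(1) to the scalar function $\|h(\cdot)\Gamma_{x}\|$ on each selected ball, and using $\epsilon\le\rho\le\rho^{j}$, gives
\[
\mu\bigl(\{y\in B(x,r_{x}):\|h(y)\Gamma_{x}\|<\epsilon\}\bigr)\le C(\epsilon/\rho)^{\alpha}\mu\bigl(B(x,r_{x})\bigr).
\]
Summing across the cover with bounded overlap $N$ and absorbing one Federer factor $D^{2}$ at each of the $k$ cascade levels (one for the enlargement $r_{x}\mapsto 3^{k}r_{x}$, one for the comparison with $\mu(B)$) produces the per-stratum bound $C(ND^{2})^{k}(\epsilon/\rho)^{\alpha}\mu(B)$; summing over the $k$ rank strata then yields $kC(ND^{2})^{k}(\epsilon/\rho)^{\alpha}\mu(B)$.

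The main obstacle is the careful bookkeeping of constants: keeping the Besicovitch and Federer costs at exactly one factor of $ND^{2}$ per rank step requires a single covering per cascade level and a measurable choice of the critical subgroup $\Gamma_{x}$, and the Mahler-type passage from the original lattice event to the sublattice events must be made with uniform control. Since this is essentially the content of the quantitative nondivergence theorem in the form presented in [K2], my proof would ultimately reduce to verifying that the hypotheses of that theorem align with those stated here and invoking it directly.
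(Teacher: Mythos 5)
The paper itself gives no proof of this lemma: it is imported verbatim as the quantitative nondivergence theorem of [K2] (Kleinbock's extension of the Kleinbock--Margulis estimate), and is only stated before being applied. Your proposal correctly recognizes this, sketches the standard rank-cascade argument with Besicovitch covering, Federer doubling, and the $(C,\alpha)$-good property, and ultimately defers to that same source, so it is consistent with the paper's treatment.
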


Lemma 2.4 implies the following proposition:
\newtheorem{proposition}[defi]{Proposition}
\begin{proposition}
Let $X$ be a Besicovitch metric space, $B=B(x, r)\subset X$, $\mu$ a
measure which is D-Federer on $\tilde{B}=B(x, 3^{n+1}r)$ for some
$D>0$ and $f$ a continuous map from $\tilde{B}$ to $\R^n$. Take
$v > n$, $c_k=\dfrac{v-n}{kv+n},k=1,\ldots,n$  and assume that
\begin{enumerate}
\item
$\exists c,\alpha >0$ such that all the functions   $x\rightarrow
\|g_tu_{f(x)}\Gamma\|$, $\Gamma \subset \Z^{n+1}$ are $(c, \alpha)$-
good on  $\tilde{B}$ with respect to $\mu$

\item
for any $d_k>c_k$, $\exists T=T(d_k)>0$ such that for any $t\geqq T$ and
 any $\Gamma \subset \Z^{n+1}$ one has $\|g_\textbf{t}u_{f(\cdot)}\Gamma\|_{\mu,
 B}\geq e^{-rk(\Gamma)d_kt}$

\end{enumerate}
Then $\omega^\times(f_*(\mu|_B))\leq v$.
\end{proposition}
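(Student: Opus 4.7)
My plan is to combine the Borel--Cantelli criterion (2.6), applied to the pushforward $f_*(\mu|_B)$, with the quantitative nondivergence statement of Lemma 2.4, applied to $h(x) = g_\textbf{t} u_{f(x)}$ for each fixed $\textbf{t}$. After this translation it suffices to show that, for every $k \in \{1,\dots,n\}$ and every $d_k > c_k$, the Borel--Cantelli series
\begin{equation*}
\sum_{\textbf{t}} \mu\bigl(\{x \in B : g_\textbf{t} u_{f(x)} \Z^{n+1}_k \text{ has a nonzero vector of norm} \le e^{-d_k t}\}\bigr)
\end{equation*}
converges over a suitable discretization of $\R_+^n$. The event on the right sits inside $\{x \in B : g_\textbf{t} u_{f(x)} \Z^{n+1} \notin K_{e^{-d_k t}}\}$, so the task reduces to a nondivergence estimate for the full lattice $\Z^{n+1}$.

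Fix $d'_k$ with $c_k < d'_k < d_k$. Hypothesis (2), invoked with $d'_k$ in place of $d_k$, gives for all sufficiently large $t$ the lower bound $\|g_\textbf{t} u_{f(\cdot)} \Gamma\|_{\mu, B} \ge e^{-\mathrm{rk}(\Gamma)\,d'_k t}$; this is precisely condition (2) of Lemma 2.4 with $\rho = e^{-d'_k t}$. Hypothesis (1) furnishes the $(c, \alpha)$-good condition (1) of that lemma. Applying Lemma 2.4 with $\epsilon = e^{-d_k t}\le \rho$ produces
\begin{equation*}
\mu\bigl(\{x \in B : g_\textbf{t} u_{f(x)} \Z^{n+1} \notin K_{e^{-d_k t}}\}\bigr) \le (n+1)\,c\,(ND^2)^{n+1}\, e^{-\alpha(d_k - d'_k)t}\,\mu(B),
\end{equation*}
which is exponentially small in $t = \sum t_i$.

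Since the number of discrete $\textbf{t} \in \R_+^n$ with $\sum t_i$ near any given integer grows only polynomially in $t$, the exponential factor dominates and the resulting series converges for each $k$ and each $d_k > c_k$. Borel--Cantelli then delivers $\omega^\times(f_*(\mu|_B)) \le v$. The only real friction lies in matching the scalar time index used in (2.6) with the multi-parameter $\textbf{t}$ and verifying that a single choice of $d'_k \in (c_k, d_k)$ works uniformly for all large $\textbf{t}$; both are essentially bookkeeping once the exponential estimate above is in hand, since the rate $\alpha(d_k - d'_k)$ can be made an arbitrary positive constant by shrinking $d'_k$ toward $c_k$.
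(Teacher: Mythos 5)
Your argument is correct and follows essentially the same route as the paper's own proof: both reduce to Lemma 2.4 applied to $h(x)=g_{\mathbf{t}}u_{f(x)}$ with $\rho=e^{-d_k't}$ for an intermediate $d_k'\in(c_k,d_k)$ (the paper takes the midpoint $\tfrac{c_k+d_k}{2}$) and $\epsilon=e^{-d_kt}$, obtaining the exponential bound that makes the Borel--Cantelli series (2.6) converge. Your extra remarks on discretizing $\mathbf{t}$ are harmless bookkeeping that the paper elides by summing over $t\in\N$.
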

\begin{proof}
We set $\lambda=f_*(\mu|B)$. (2.4.1) is the same as (2.5.1).
(2.5.2) implies (2.4.2) for any $t>T(\dfrac{c_k+d_k}{2})$. Hence by
lemma 2.4, \\
$\lambda(\{\y|g_\textbf{t}u_\y\Z^{n+1}_k \textrm{ contains at least one nonzero vector with norm} \leq e^{-d_kt}\})\leq
\lambda(\{\y|\textrm{  }g_tu_\y\Z^ {n+1} \notin
K_{e^{-d_kt}}\})
=\mu(\{x \in B |\textrm{  }h(x)\Z^ {n+1} \notin
K_{e^{-d_kt}}\}$
$\leq const \cdot e^{-\alpha\frac{d_k-c_k}{2}t}$ for all but finitely
many $t \in \N$. Therefore (2.5) follows.
\end{proof}
\bigskip

Suppose $\R^{n+1}$ has standard basis $e_1,\ldots,e_{n+1}$ and we extend its Euclidiean structure to $\bigwedge^j(\R^{n+1})$, then for all index sets $I \subseteq \{1,2,\ldots,{n+1}\}$,
$\{e_I\big|e_I=e_{i_1}\wedge\ldots\wedge e_{i_j},\sharp I=j\}$ form an orthogonal basis of $\bigwedge^j(\R^{n+1})$.
We identify $\Gamma$, subgroup of $\Z^{n+1}$ of rank $j$ with $\w \in \bigwedge^j(\R^{n+1})$ and reproduce the calculations
done in [K3].

\begin{displaymath}
g_tu_\y\w=\sum_{\substack{I}}e^{-\sum_{\substack{i \in I}}t_i}\langle e_I,\w \rangle e_I+
\sum_{\substack{J}}e^{t-\sum_{\substack{i \in J}}t_i}(\sum_{i=0}^n\langle e_i\wedge e_J,\w \rangle y_i) e_0 \wedge e_J
\end{displaymath}

\begin{displaymath}
\widetilde{f}=(1,f_1,\ldots,f_n)=(1,g_1,\ldots,g_s)R
\end{displaymath}
for $(1,g_1,\ldots,g_s)$ linearly independent and $R$ a $(s+1)\times(n+1)$ matrix, set

\begin{displaymath}
C_J(\w)=\langle e_i\wedge e_J,\w \rangle y_i) e_J, \sharp J=j-1
\end{displaymath}

Up to some constant
\begin{equation}
\big\|g_tu_f \w\big\|=max\Big(e^{-\sum_{\substack{i \in I}}t_i}\big\|\langle e_I,\w \rangle\big\|,\quad e^{t-\sum_{\substack{i \in J}}t_i}\big\|RC_J(\w)\big\|
\Big)
\end{equation}
(2.5.2) can be rewritten as

$\forall d_k>c_k, \exists T=T(d_k)>0 \textrm { such that for any }t\geqq T \textrm { and
 any }
 \Gamma \in \Z^{n+1} \textrm{ of rank j one has} $
\begin{equation}
max\Big(e^{-\sum_{\substack{i \in I}}t_i}\big\|\langle e_I,\w \rangle\big\|,\quad e^{t-\sum_{\substack{i \in J}}t_i}\big\|RC_J(\w)\big\|
\Big)\geqq e^{-jd_kt},1\leqq j\leqq n
\end{equation}
Note that $j$ and $k$ are two independent variables: $j$ denotes rank of subgroup of $\Z^{n+1}$ and $k$ the number of
nonzero entries in the vectors used for approximation. \\

For hyperplane $L$ parameterized in Theorem 1.1,
\begin{equation}
R=\left( I_n  \begin{array}{c}
b\\a_1\\ \vdots \\  a_{s-1} \\0\\ \vdots \\0 \\ \end{array} \right)
\end{equation}
$a_i \neq 0, 1\leq i \leq s-1$

\begin{equation}
f=(1,x_1,\ldots,x_{n-1},a_1x_1+\ldots a_{s-1}x_{s-1}+b), a_i \neq 0,1\leqslant i\leqslant s-1
\end{equation}
Thanks to Lemma 4.6 of [K2] we know that $\|RC_J(\w)\|\geq 1$ for $j>1$, so  (2.8) is automatically fulfilled for such
subgroups of $Z^{n+1}$ .We only need to check for subgroups of rank 1, or vectors, for a negation of (2.8).
\begin{displaymath}
\w=(p_0,p_1,\ldots,p_n) \in \Z^{n+1}
\end{displaymath}
\begin{displaymath}
\big\|RC_J(\w)\big\|=\left\|  \begin{array}{c}
p_0+bp_n\\p_1+a_1p_n\\ p_2+a_2p_n \\ \vdots \\p_{s-1}+a_{s-1}p_n \\p_s \\ \vdots \\ p_{n-1}\\ \end{array}  \right\|
\end{displaymath}
To avoid $\|RC_J(\w)\|\geq 1$,$p_s,p_{s+1},\ldots,p_{n-1}$ must all be zero. Since  $a_i \neq 0,1\leqslant i\leqslant s-1$,$p_1,\ldots,p_{s-1}$ and
$p_n$ must all be nonzero. By previous notation
$\w=(p_0,p_1,\ldots,p_n)\in \Z_s^{n+1}$. And
\begin{displaymath}
\big\|RC_J(\w)\big\|=\left\|  \begin{array}{c}
p_0+bp_n\\p_1+a_1p_n\\ p_2+a_2p_n \\ \vdots \\p_{s-1}+a_{s-1}p_n \end{array}  \right\|
\end{displaymath}
The above observations coupled with (2.8) supply a handy tool for establishing upper bounds of multiplicative exponents of hyperplanes:
\begin{proposition}
Let $X$ be a Besicovitch metric space, $B=B(x, r)\subset X$, $\mu$ a
measure which is D-Federer on $\tilde{B}=B(x, 3^{n+1}r)$ for some
$D>0$ and $f$ a continuous map from $\tilde{B}$ to $\R^n$ defined in (2.10). Take
$v > n$, $c_s=\dfrac{v-n}{sv+n}$  and assume that
\begin{enumerate}
\item
$\exists c,\alpha >0$ such that all the functions   $x\rightarrow
\|g_tu_{f(x)}\Gamma\|$, $\Gamma \subset \Z^{n+1}$ are $(c, \alpha)$-
good on  $\tilde{B}$ with respect to $\mu$

\item
for any $d_s>c_s$, $\exists T=T(d_s)>0$ such that for any $t\geqq T$ and
 any $\w \in \Z^{n+1}_s$ one has
 \begin{displaymath}
 max\Big(e^{-t_i}\big\|\langle e_i,\w \rangle\big\|,\quad e^{t}\big\|RC_J(\w)\big\|
\Big)\geqq e^{-d_st}
 \end{displaymath}
\end{enumerate}
Then $\omega^\times(f_*(\mu|_B))\leq v$.
\end{proposition}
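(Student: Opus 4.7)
The plan is to mimic the proof of Proposition 2.5 while exploiting the structural observations made in the paragraph immediately preceding the statement, so that only the case $\w\in\Z^{n+1}_s$ requires a genuine assumption. By formula (2.5) it suffices to bound $\gamma_k(f(x))$ for every $k=1,\ldots,n$ and $\mu$-a.e. $x\in B$. I would dispatch the indices $k\ne s$ first, pointwise and with no extra hypothesis: any nonzero $\w\in\Z^{n+1}_k$ with $k\ne s$ satisfies $\|RC_\emptyset(\w)\|\geq 1$ by the case analysis preceding the proposition, so (2.7) yields $\|g_{\T}u_{f(x)}\w\|\geq e^t$ for every $x\in B$ and every admissible $\T$ with $\sum t_i=t$. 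Hence $\gamma_k(f(x))\leq 0$ on $B$, which makes the $k$-th term in (2.5) at most $n\leq v$.

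For the remaining index $k=s$ I would invoke quantitative nondivergence (Lemma 2.4) with $h(x)=g_{\T}u_{f(x)}$ and the intermediate exponent $\rho=e^{-(c_s+d_s)t/2}$, mirroring Proposition 2.5. Hypothesis (1) supplies the $(c,\alpha)$-good property (2.4.1). For (2.4.2) one needs $\|g_{\T}u_{f(\cdot)}\Gamma\|_{\mu,B}\geq\rho^{\mathrm{rk}(\Gamma)}$ for every subgroup $\Gamma\subset\Z^{n+1}$; this splits into three cases. Rank-one subgroups spanned by $\w\in\Z^{n+1}_s$ are handled by hypothesis (2). Rank-one subgroups with $\w\in\Z^{n+1}_k$, $k\ne s$, are handled by the uniform estimate $\|g_{\T}u_{f(x)}\w\|\geq e^t$ above, which dominates $\rho$ for all large $t$. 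Rank-$j$ subgroups with $j\geq 2$ are handled by Lemma 4.6 of [K2], which gives $\|RC_J(\w)\|\geq 1$, and then (2.7) forces $\|g_{\T}u_{f(x)}\w\|\geq 1\geq \rho^{j}$; this is the same automatic input already used in Proposition 2.5.

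Lemma 2.4 then yields
\[
\mu\bigl(\{x\in B:g_{\T}u_{f(x)}\Z^{n+1}\notin K_{e^{-d_s t}}\}\bigr)\leq \mathrm{const}\cdot e^{-\alpha(d_s-c_s)t/2}\,\mu(B),
\]
a bound summable over integer $t$. Borel--Cantelli then gives $\gamma_s(f(x))\leq c_s$ for $\mu$-a.e.\ $x\in B$, so the $k=s$ term in (2.5) is at most $v$, finishing the conclusion $\omega^\times(f_*(\mu|_B))\leq v$. The only mildly delicate point is passing from the continuous parameter $\T\in\R_+^n$ to the discrete $t\in\N$ needed for Borel--Cantelli, which I expect to handle by summing over a fixed sufficiently fine discretization of the simplex $\{\T:\sum t_i=t\}$ and absorbing the multiplicity into the constant; this is the most tedious step but not conceptually hard.
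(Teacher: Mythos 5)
Your argument is correct and is essentially the paper's own (implicit) proof: the paper obtains this proposition by observing that for the map in (2.10) all subgroups other than rank-one vectors in $\Z^{n+1}_s$ automatically satisfy the lower bound needed in Proposition 2.5 (via Lemma 4.6 of [K2] for rank $\geq 2$ and the explicit form of $RC_J(\w)$ for rank one), and then applying Proposition 2.5, which is exactly the Lemma 2.4 plus Borel--Cantelli argument you unfold. The only cosmetic point is that for rank-one $\w\notin\Z^{n+1}_s$ the uniform lower bound on $\|RC_J(\w)\|$ is $\min(1,\min_i|a_i|)$ rather than $1$, which affects nothing; your extra care with discretizing $\T$ is, if anything, more precise than the paper.
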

\begin{lemma}\label{lemma:  negation}
Let $\mu$ be a measure on a set $B$, take $v>n$ and
$c_s=\frac{v-n}{sv+n}$ and let $f$ as in (2.10)  be a map  such
that $\mathrm{(2.6.2)}$  does not hold. Then $f(B\bigcap supp
\textrm{ } \mu)\subset W_u^\times$ for some $u>v$.
\end{lemma}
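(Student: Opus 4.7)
The plan is to unpack the negation of $(2.6.2)$ and recognize it directly as the Diophantine condition defining $W_u^\times$ for a suitable $u>v$. The negation supplies some $d_s>c_s$ such that for every $T>0$ one can find $\T\in\R_+^n$ with $t=\sum t_i\geq T$ and a nonzero $\w=(p_0,p_1,\ldots,p_n)\in\Z^{n+1}_s$ satisfying
\[
\max\bigl(e^{-t_i}|p_i|,\;e^{t}\|RC_\emptyset(\w)\|\bigr)<e^{-d_s t}.
\]
The quantities on the left-hand side are independent of $x$, so the same $\w$ serves every $x\in B$ at once. Using the explicit form of $RC_\emptyset(\w)$ recorded immediately before Proposition 2.6, integrality of the $p_i$ together with $a_i\neq 0$ forces, for $t$ above some threshold, that $p_s=\cdots=p_{n-1}=0$ while $p_1,\ldots,p_{s-1},p_n$ are all nonzero; hence $\q\df(p_1,\ldots,p_n)$ really has exactly $s$ nonzero entries.

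The parametrization $(2.10)$ then yields the identity
\[
\q\cdot f(x)+p_0=(p_0+bp_n)+\sum_{i=1}^{s-1}(p_i+a_ip_n)\,x_i,
\]
whose coefficients are precisely the entries of $RC_\emptyset(\w)$, so that
\[
|\q\cdot f(x)+p_0|\leq C_B\,\|RC_\emptyset(\w)\|\leq C_B\,e^{-(1+d_s)t}
\]
uniformly in $x\in B$, where $C_B$ depends only on $\sup_{x\in B}\|x\|$.

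Now fix any $c'_s\in(c_s,d_s)$ and set $u\df(n+nc'_s)/(1-sc'_s)$, which satisfies $u>v$ by the formula relating $c_k$ and $v$ stated just after Lemma 2.3. For $t$ beyond a threshold depending on $C_B$ and $d_s-c'_s$, the two estimates combine to
\[
\max\bigl(e^{t}|\q\cdot f(x)+p_0|,\;e^{-t_i}|p_i|\bigr)\leq e^{-c'_s t},
\]
which is exactly condition $(2.3.2)$ of Lemma 2.3 (with $k=s$) applied to $\y=f(x)$ and $E=\{(|\q\y+p|,\q):p\in\Z,\,\q\in\Z^n\}$. The step requiring care is the passage from the unbounded set of $\T$'s to an infinite family of distinct $\q$'s, as demanded by the definition of $\omega^\times$: if the same $\w$ were to recur for arbitrarily large $t$, the uniform bound would force $\q\cdot f(x)+p_0=0$, placing $f(x)$ on a rational hyperplane and giving $\omega^\times(f(x))=\infty$; otherwise $\|\q\|$ must tend to infinity and the equivalent $(2.3.1)$ of Lemma 2.3 yields $\omega^\times(f(x))\geq u$. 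Either way $f(x)\in W_u^\times$ uniformly in $x\in B$, and in particular on $B\cap\supp\mu$.
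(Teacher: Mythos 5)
Your argument is correct and follows the same route as the paper: the negation of (2.6.2) produces, uniformly in $x\in B$, nonzero vectors of $g_{\T}u_{f(x)}\Z^{n+1}_s$ of norm at most $e^{-d_s' t}$ for an unbounded set of $\T$, and the correspondence of Lemma 2.3 (equivalently, the formula (2.4) for $\omega^\times$ in terms of $\gamma_s$) then gives $\omega^\times(f(x))\geq u$ for some $u>v$. The paper's two-line proof delegates to (2.4) the bookkeeping you carry out explicitly --- the constant $C_B$ relating $|\q\cdot f(x)+p_0|$ to $\|RC_J(\w)\|$ and the passage from unbounded $\T$ to infinitely many $\q$ --- and your handling of both points is sound.
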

\begin{proof}

The assumption of the lemma says that $\exists$ an unbounded
sequence $\mathbf{t}_i$ and a sequence of $\w \in Z^{n+1}_{s}$  such that
$\forall x \in B\bigcap supp \textrm{ } \mu \quad$\\$
\|g_\mathbf{ti}u_{f(x)}\w\|<e^{-d_st_i}$.\\
Hence $g_{\mathbf{ti}}u_{f(x)}\Z^{n+1}_s $  has at least one nonzero vector with norm $<e^{-d_st_i} $ $\Rightarrow \gamma_s(f(x))\geqq d_s$. \quad
By (2.4) the lemma holds.
\end{proof}

\newtheorem{thrr}[defi]{Theorem}
\begin{thm}
Let $\mu$ be a Federer measure on a Besicovitch metric space $X$,$L$  an affine subspace of $\R^n$ as in Theorem 1.1.
Let $f:X\rightarrow L$ be a continuous map which is $\mu-$good and $\mu-$nonplanar in L described in (2.10). Then the following are equivalent
for $v>n$
\begin{enumerate}

\item $\{x \in$ supp $\mu |f(x) \notin W^\times_u\}$ is nonempty for any $u>v$
\item $w^\times(f_*\mu)\leqslant v$
\item (2.6.2) holds for $R$ of (2.9)
\end{enumerate}
\end{thm}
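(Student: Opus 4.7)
The plan is to establish the cycle $(3)\Rightarrow(2)\Rightarrow(1)\Rightarrow(3)$, using the two workhorses set up in this section: Proposition 2.6 drives $(3)\Rightarrow(2)$, a short measure-theoretic consequence of definition (2.3) handles $(2)\Rightarrow(1)$, and the contrapositive of Lemma \ref{lemma:  negation} closes the loop with $(1)\Rightarrow(3)$.

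For $(3)\Rightarrow(2)$, the $\mu$-good assumption supplies hypothesis (1) of Proposition 2.6 on any ball, while (3) is exactly hypothesis (2) for the matrix $R$ in (2.9). For each ball $B\subset X$ centered on $\supp\,\mu$ and admitting the Federer condition, Proposition 2.6 yields $\omega^\times(f_*(\mu|_B))\le v$. A countable cover of $\supp\,\mu$ by such balls, enabled by the Besicovitch property of $X$, promotes this to the global bound $\omega^\times(f_*\mu)\le v$. For $(2)\Rightarrow(1)$, the hypothesis forces $f_*\mu(W^\times_u)=0$, i.e.\ $\mu(f^{-1}(W^\times_u))=0$, for every $u>v$; hence the complement of $f^{-1}(W^\times_u)$ meets $\supp\,\mu$ in a set of full $\mu$-measure, and is in particular nonempty.

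For $(1)\Rightarrow(3)$, I argue the contrapositive. If (3) fails then on some ball $B$ the inequality (2.6.2) breaks, and Lemma \ref{lemma:  negation} produces $u>v$ with $f(B\cap\supp\,\mu)\subset W^\times_u$. To negate (1), this local inclusion must be propagated to all of $\supp\,\mu$: one covers $\supp\,\mu$ by balls on which (2.6.2) fails for the same $d_s>c_s$, whence the resulting exponent $u=\frac{n+nd_s}{1-sd_s}>v$ is uniform across the cover and $f(\supp\,\mu)\subset W^\times_u$ contradicts (1).

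The main obstacle is precisely this last globalization: Lemma \ref{lemma:  negation} produces a single $u$ from the failure of (2.6.2) on a single ball, but (1) is a statement about the entire support. The key observation that makes the step go through is that the $c_s$ in (2.6.2) is absolute (it depends only on $v$ and $s$, not on the ball), so the failure of (3) delivers a single $d_s>c_s$ serving the whole cover; the Besicovitch property then glues the local conclusions into the desired global containment. Granting this, the equivalence is an immediate consequence of Proposition 2.6 and Lemma \ref{lemma:  negation}.
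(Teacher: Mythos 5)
Your proposal is correct and follows essentially the same route as the paper: the cycle $(3)\Rightarrow(2)\Rightarrow(1)\Rightarrow(3)$, with the nondivergence proposition handling $(3)\Rightarrow(2)$, the trivial full-measure observation giving $(2)\Rightarrow(1)$, and the contrapositive of Lemma 2.7 giving $(1)\Rightarrow(3)$. The only difference is that you spell out the globalization of Lemma 2.7's single-ball conclusion (via the uniformity of $d_s$, hence of $u$), a point the paper's proof leaves implicit; this is a correct and worthwhile clarification rather than a departure in method.
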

\begin{proof}
Suppose (2.8.2) holds then $\{x \in$ supp $\mu |f(x) \notin W^\times_u\}$ has full measure for any $u>v$ hence (2.8.1) holds

If (2.8.3) holds then (2.6.2) implies (2.5.2) for (2.10) map by previous discussion concerning subgroups of various ranks. According to Lemma 1.1 of [K3] (2.5.1) is met.
Proposition(2.5) is therefore applicable to establish (2.8.2).

If (2.8.3) does not hold , no ball $B$ intersecting  supp $\mu$ satisfies (2.6.2). By Lemma 2.7 $f(B\bigcap$ supp $\mu) \subset
W_u^\times$ for some $u>v$. This contradicts (2.8.1).

\end{proof}

The above theorem shows that if $\exists y \in L$ with $\omega^\times(y)\geqslant v$ then the set
$\{y \in L| \omega^\times(y)\geqslant v\}$ has full measure. And we have the theorem for nondegenerate submanifold:
\begin{thm}
Let $M$ be a nondegenerate submanifold of $L$ as in Theorem 1.1\\
\begin{displaymath}
\omega^\times(L)=\omega^\times(M)=inf\{\omega^\times(\y)|\y \in L\}=inf\{\omega^\times(\y)|\y \in M\}
\end{displaymath}

\end{thm}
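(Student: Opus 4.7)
The strategy is to apply Theorem 2.8 twice---once to a parameterization of the full hyperplane $L$, and once to the inclusion of the nondegenerate submanifold $M$ into $L$---and then exploit the observation that the condition labelled (2.8.3) depends only on the matrix $R$ of (2.9), i.e.\ only on $L$ itself.

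First I would take $X=\R^{n-1}$ equipped with Lebesgue measure $\mu$ and let $f$ be the affine parameterization (2.10) of $L$. Then $f$ is polynomial, hence $(c,\alpha)$-good on every ball; its image is all of $L$ so it is $\mu$-nonplanar in $L$; and $f_*\mu$ agrees up to bounded density with the natural volume measure on $L$, giving $\omega^\times(f_*\mu)=\omega^\times(L)$. Theorem 2.8 then yields, for $v>n$, the equivalences
\[
\omega^\times(L)\leq v \ \Longleftrightarrow\ (2.6.2)\text{ holds for }R \ \Longleftrightarrow\ \inf_{\y\in L}\omega^\times(\y)\leq v,
\]
the second equivalence being the direct translation of condition (2.8.1) into a statement about pointwise values of $\omega^\times$. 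Passing to the infimum over $v$ gives $\omega^\times(L)=\inf_{\y\in L}\omega^\times(\y)$.

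Next I would apply Theorem 2.8 with $X=M$, $\mu$ the Riemannian volume on $M$, and $f$ the inclusion $M\hookrightarrow L$. Nondegeneracy of $M$ inside $L$ is precisely the $\mu$-nonplanar condition there, while the $\mu$-good and Federer hypotheses follow from standard local-analytic arguments as in [K2]; the same equivalence chain produces $\omega^\times(M)=\inf_{\y\in M}\omega^\times(\y)$. The crux is then that condition (2.6.2) for the matrix $R$ of (2.9) depends only on the data $(a_1,\ldots,a_{s-1},b)$ defining $L$ and is insensitive to the choice of $X$, $\mu$ or $f$, so for every $v>n$ one has $\omega^\times(L)\leq v$ iff $\omega^\times(M)\leq v$. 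Hence $\omega^\times(L)=\omega^\times(M)$, and combining with the two infimum identities above yields the four-way equality in the statement. The only non-formal step is verifying the $(c,\alpha)$-good and nonplanar hypotheses for the inclusion of a nondegenerate $M\subset L$ with respect to its Riemannian volume, which is the same verification used in [K2] and introduces no new difficulty here.
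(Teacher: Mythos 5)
Your proposal is correct and follows essentially the same route the paper intends: the paper states Theorem 2.9 as an immediate consequence of Theorem 2.8, relying exactly on your key observation that condition (2.8.3) depends only on the matrix $R$ of (2.9) (hence only on $L$), so that the zero-one law of Theorem 2.8 applies simultaneously to the parameterization of $L$ and to any $\mu$-good, $\mu$-nonplanar (i.e.\ nondegenerate) submanifold, forcing all four quantities to coincide.
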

Furthermore from Theorem 2.8 we derive
\begin{equation}
\omega^\times(L)=max\big\{n,sup\{v\big|(2.6.2)\textrm{ } does\textrm{ } not\textrm{ } hold\textrm{ } for \textrm{ }R\}\big\}
\end{equation}

\begin{thm}
Let $L$ be a hyperplane of  $\R^n$ parameterized  by \\ $(x_1, \ldots,
x_{n-1}, a_1x_1+\ldots+a_{s-1}x_{s-1}+b)$ with $a_i\neq0,
i=1,\ldots,s-1$  \\
 $ \textrm{
 }\omega^\times(L)=max\big\{n,\frac{n}{s}\sigma(a_1,\ldots,a_{s-1},b)\big\}$
\end{thm}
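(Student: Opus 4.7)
The plan is to compute $\sup\{v > n : (2.6.2) \text{ fails for } R\}$ and then invoke formula (2.11), which has already been established. The combinatorial analysis carried out just before Proposition 2.6 shows that the only subgroups of $\Z^{n+1}$ for which (2.6.2) can fail are the rank-$1$ subgroups generated by vectors of the form
$$\w = (p_0, p_1, \ldots, p_{s-1}, 0, \ldots, 0, p_n) \in \Z^{n+1}_s, \qquad p_1, \ldots, p_{s-1}, p_n \neq 0,$$
and that for such $\w$ one has $\|RC_J(\w)\| = \max(|p_0 + b p_n|,\, |p_i + a_i p_n|)$. Thus I only have to decide for which values of $d_s$ one can find arbitrarily large $\mathbf{t} \in \R^n_+$ and $\w$ of this form satisfying $e^{-t_i}|p_i| < e^{-d_s t}$ for the $s$ nonzero coordinates and $e^{t}\|RC_J(\w)\| < e^{-d_s t}$.

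Set $q = p_n$; then the constraints say $(p_0, p_1, \ldots, p_{s-1}) \in \Z^s$ must be close to $(-bq, -a_1 q, \ldots, -a_{s-1} q)$. Optimizing $\mathbf{t}$ is elementary: indices $i \in \{s, \ldots, n-1\}$ contribute nothing so I set $t_i = 0$ there, while the lower bounds $e^{t_i} > |p_i|e^{d_s t}$ and $e^{t_n} > |q|e^{d_s t}$, together with $|p_i| = |a_i q|(1 + o(1))$ for $|q| \to \infty$, force $t = \sum t_i \geq \frac{s \log|q|}{1 - s d_s} + O(1)$. Substituting this lower bound into the remaining inequality $e^{(1+d_s)t}\|RC_J(\w)\| < 1$ converts it into
$$\|RC_J(\w)\| < |q|^{-s(1+d_s)/(1-s d_s) + o(1)},$$
which by the very definition of $\sigma := \sigma(a_1,\ldots,a_{s-1}, b)$ has solutions for infinitely many $q$ if and only if $\frac{s(1+d_s)}{1 - s d_s} < \sigma$.

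Solving the threshold equation $\frac{s(1+d_s^{*})}{1 - s d_s^{*}} = \sigma$ gives $d_s^{*} = \frac{\sigma - s}{s(1+\sigma)}$, and a direct substitution into the formula $v = \frac{n + n d_s^{*}}{1 - s d_s^{*}}$ (the $k = s$ case of the relation between $v$ and $c_k$ recorded in the excerpt) simplifies cleanly to $v^{*} = \frac{n\sigma}{s}$. Feeding this supremum into (2.11) yields $\omega^\times(L) = \max\bigl\{n,\, \tfrac{n}{s}\sigma(a_1,\ldots,a_{s-1},b)\bigr\}$; the case $\sigma \leq s$ is automatic, since then $d_s^{*} \leq 0$ so (2.6.2) can never fail for $v > n$. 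The main technical obstacle is handling both directions symmetrically: from any near-optimal rational approximation of $(a_1,\ldots,a_{s-1},b)$ one must \emph{construct} an actual $\w \in \Z^{n+1}_s$ and an unbounded sequence of $\mathbf{t}$'s witnessing the failure of (2.6.2), and conversely any witness must be seen to \emph{force} $q = p_n$ to realize an approximation of the corresponding quality. In both directions the estimate $|p_i| = |a_i q|(1 + o(1))$ for large $|q|$ is what makes the $\mathbf{t}$-optimization carry the same constant at both ends, and is what allows the $o(1)$ exponent correction above to be absorbed into the strict inequality defining $\sigma$.
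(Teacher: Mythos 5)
Your proposal is correct and follows essentially the same route as the paper: negate (2.6.2), reduce to rank-one vectors in $\Z^{n+1}_s$, use $|p_i|=|a_ip_n|(1+o(1))$ to turn $\prod_\times$ into a power of $|p_n|$, identify the resulting inequality with the $\sigma$-exponent of $(a_1,\ldots,a_{s-1},b)$, and feed the threshold into (2.11). The only difference is that you carry out the $\mathbf{t}$-optimization by hand, whereas the paper obtains the same dictionary (with exponent $v/n=\frac{1+c_s}{1-sc_s}$, matching your $\frac{1+d_s}{1-sd_s}$) by directly invoking Lemma 2.3 with $k=s$, $|x|=\|RC_J(\w)\|$, $z_i=p_i$.
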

\begin{proof}
(2.6.2) $ \Leftrightarrow \forall d_s>c_s$,$\exists T>0$ such that
$\forall t>T,$\\$\forall \w=(p_0,p_1,\ldots,p_n)
\in \Z^{n+1}_s$ with $p_i \neq 0$ for $1\leqslant i\leqslant s-1$ and $i=n$,
$max\Big(e^{-t_i}|p_i|,e^t\|RC_J(\w)\|\Big)\geqslant e^{-d_st}$.
Lemma 2.3 is applicable with
\begin{displaymath}
k=s,v=\dfrac{n+nc_s}{1-sc_s}, |x|=\|RC_J(\w)\|,z_i=p_i,E={(|x|,\z)} \subset \R^{n+1}
\end{displaymath}
(2.6.2) is equivalent to $\exists N>0 \textrm{ such that } \forall (x,\z) \in E \textrm{  with } \|\z\|>N $
\begin{equation}
\left\|  \begin{array}{c}
p_1+a_1p_n\\ p_2+a_2p_n \\ \vdots \\p_{s-1}+a_{s-1}p_n \\ p_0+bp_n \end{array}  \right\|
 \geqslant (p_1p_2\ldots p_{s-1}p_n)^{-v/n}
\end{equation}
By assuming $\|p_{i}+a_{i}p_n \|\leqslant 1$ for $1\leqslant i\leqslant s-1$, we know up to some constant (2.12) is the same as
\begin{displaymath}
\left\|  \begin{array}{c}
p_1+a_1p_n\\ p_2+a_2p_n \\ \vdots \\p_{s-1}+a_{s-1}p_n \\ p_0+bp_n \end{array}  \right\|
 \geqslant p_n^{-sv/n}
\end{displaymath}
Therefore by (2.11) $\omega^\times(L)=max\big\{n,\frac{n}{s}\sigma(a_1,\ldots,a_{s-1},b)\big\}$

\end{proof}
Theorem 1.1 is obtained by combining Theorem 2.9 and Theorem 2.10.

\bigskip

\section{A special class of hyperplanes and elementary approach}

We prove a special case of Theorem 1.1 via elementary methods here, namely,
for a hyperplane $L \subset \R^n$ parameterized as $(x_1, \ldots,
x_{n-1}, b)$ $ \textrm{
 }\omega^\times(L)=max\{n,n\sigma(b)\}$.\\
First, in the definition of $\omega^\times(\y)$
$\y=(x_1,\ldots,x_{n-1},b)$, if we set $\q=(0,0,\ldots,0,q)$ we see
that $\omega^\times(\y)\geqslant n\sigma(b)$, the coefficient $n$
arising from the denominator on the right hand  side of the equality
of (1.2). Therefore $\omega^\times(L)\geqslant max\{n,n\sigma(b)$.\\
 Second, $\prod_\times(\q)=\prod_{i=1\ q_i\neq 0}^{n}|q_i|\leqslant
 \|\q\|^n$\\
 so $\omega^\times(\y)\leqslant n\omega(\y)$,and $\omega^\times(L)\leqslant
 n\omega(L)=n\sigma(b)$.\\
 Together we have $\omega^\times(L)=max\{n,n\sigma(b)\}$.

\bigskip

\textbf{Acknowledgement.} The author is grateful to Professor
Kleinbock for helpful discussions.

\bigskip

\end{document}